\newcommand{\Z}{\mathbb{Z}}
\newtheorem{lemma}{Lemma}
\newtheorem{proposition}{Proposition}
\newtheorem{theorem}{Theorem}
\newtheorem{corollary}{Corollary}
\theoremstyle{definition}
\newtheorem{remark}{Remark}
\newtheorem{definition}{Definition}
\title{Two-Dimensional Billiards Are Turing Complete}
\author{Eva Miranda}
\address{ Eva Miranda,
Laboratory of Geometry and Dynamical Systems and SYMCREA research unit, Department of Mathematics,  Universitat Polit\`{e}cnica de Catalunya, Barcelona, Spain  \& Centre de Recerca Matemàtica, CRM \& Forschungsinstitut für Mathematik, ETH Zürich, Zürich, Switzerland}
\email{eva.miranda@upc.edu}
\author{Isaac Ramos}
\address{ Isaac Ramos, Department of Mathematics, ETH Zürich, Zürich, Switzerland \& SYMCREA research unit at Universitat Polit\`{e}cnica de Catalunya, Barcelona, Spain}
\email{iramos@ethz.ch}
\thanks{Both authors are partially supported by the project “Computational, dynamical and geometrical complexity in fluid dynamics” (COMPLEXFLUIDS), Ayudas Fundación BBVA a Proyectos de Investigación Científica 2021. Eva Miranda is supported by the Catalan Institution for Research and Advanced Studies via an ICREA Academia Prize 2021 and by the Alexander Von Humboldt Foundation via a Friedrich Wilhelm Bessel Research Award. Miranda is also supported by the Spanish State
Research Agency, through the Severo Ochoa and Mar\'{\i}a de Maeztu Program for Centers and Units
of Excellence in R\&D (project CEX2020-001084-M) and both authors are supported by the project PID2023-146936NB-I00   and the joint AEI-DFG project PCI2024-155042-2 entitled \emph{Celestial Mechanics, Hydrodynamics, and Turing Machines} and by the Spanish State Research Agency MCIU/AEI / 10.13039/501100011033/ FEDER, UE.  Miranda thanks the FIM at ETHZ Zurich for their hospitality during her stay in Zürich as Nachdiplom lecturer in the Fall Semester of 2025 when this article was written. Isaac Ramos is supported by a fellowship from
``La Caixa'' Foundation (ID 100010434) with code LCF/BQ/PFA25/11000070. }
\begin{document}
\maketitle

\begin{abstract}
We show that two-dimensional billiard systems can simulate universal Turing machines. Billiards serve as idealized models of particle motion with elastic reflections and arise naturally as limits of smooth Hamiltonian systems under steep confining potentials, and as an exact reformulation of hard-sphere gas dynamics. By invoking the undecidability of the halting problem, originally established by Turing, our results show that undecidable trajectories arise in these physically natural billiard-type models. We further discuss, at the level of analogy, a connection to collision-chain limits in celestial mechanics, where near-collision dynamics exhibit billiard-like symbolic itineraries.
\end{abstract}
\subsection*{Significance statement}

Billiards are a textbook model of deterministic motion: a particle moves freely and reflects specularly from rigid walls. We show that, even in two dimensions, billiard trajectories can simulate arbitrary Turing machines. This universality implies a sharp limit on prediction: there is no general algorithm that can decide basic questions such as whether a trajectory is periodic. Because billiards also arise as limits of smooth Hamiltonian systems with increasingly steep confining potentials, these algorithmic barriers are not confined to idealized hard-wall models. Our results place undecidability, alongside chaos, as a fundamental obstruction to long-term prediction even in low-dimensional classical dynamics.

\section{Introduction}

Billiards are among the simplest dynamical systems: a point particle moves freely inside a domain and undergoes specular reflection at the boundary. Since Birkhoff’s formulation of the billiard map, they have served as canonical models for Hamiltonian and symplectic dynamics with singular interactions \cite{Birkhoff1917,Birkhoff1927}. Despite their elementary definition, billiards exhibit an extraordinary range of behaviors, from complete integrability to strong chaos. Sinai’s dispersing billiards provided foundational examples of hyperbolic and ergodic dynamics \cite{Sinai1963}, while polygonal billiards connect to flat geometry, translation surfaces, and Teichm\"uller dynamics \cite{Mirzakhani2008,KerckhoffMasurSmillie1986}.

Billiards also play a central role in physics. In kinetic theory, systems of hard spheres are equivalent to billiard flows in configuration space \cite{ChernovMarkarian2006}. In quantum mechanics, quantum billiards model systems such as quantum dots and microwave cavities, where a particle is confined by rigid boundaries, and in the semiclassical regime, many features of the quantum dynamics are determined by the geometry and trajectories of the corresponding classical billiard \cite{Berry,Gutzwiller1990,Haake2010}. Importantly, billiards can be realized as limits of smooth Hamiltonian systems by replacing hard walls with steep confining potentials that diverge near the boundary. In this soft-wall limit, smooth Hamiltonian flows converge (excluding trajectories that hit the boundary nearly tangentially) to billiard motion, allowing billiard results to be interpreted within standard classical mechanics.

Given the richness of billiard dynamics, a natural question arises: can billiards realize the full complexity of computation? Since the pioneering work of Moore \cite{Moore1,Moore2}, it has become clear that many continuous dynamical systems can simulate algorithms and thereby exhibit undecidability phenomena rooted in Turing’s Halting problem \cite{turing} (for a historical discussion on the attribution see also \cite{joel}). At a more foundational level, the question of whether physical systems can genuinely realize such Turing-universal behavior has been central since the very origins of computation, notably in the seminal works of Feynman \cite{Feynman1982Simulating} and Wolfram \cite{WolframUndecidability}.
Universal computation has since been embedded in a wide range of settings, including polynomial differential equations, mechanical systems with tailored potentials, cellular automata and hydrodynamic flows \cite{KoiranCosnardGarzon,GracaCampagnoloBuescu_Computability,BournezGracaHainry,Wolfram_CA,Wolfram_Fluids,tao1,CardonaMirandaPeralta_conjectureMoore, CardonaMirandaPeralta_survey, CardonaMirandaPeraltaPresas, CardonaMirandaPeralta_universality, CardonaMirandaPeralta_euclidean, GonzalezMirandaPeralta_universality, BrueraCardonaMirandaPeraltaSalasSalo2024}. A unifying framework behind several of these results is Topological Kleene Field Theory, which encodes computable functions into flows on manifolds with boundary so that dynamical questions translate into questions of computation \cite{GonzalezMirandaPeralta_TKFT}. 

A Turing machine is a simple mathematical model of computation. It consists of a finite set of internal states, a tape whose entries belong to a fixed alphabet, and a transition rule that prescribes, at each step, how the machine updates its state, modifies the tape, and moves along it. Despite its elementary definition, there exist universal Turing machines that can simulate any algorithmic procedure.

When a dynamical system can reproduce, under a suitable encoding, the evolution of a universal Turing machine, the system is said to be \emph{Turing complete}. This provides a precise bridge between computation and dynamics. In particular, natural questions about the trajectories of a Turing complete system may inherit the undecidability of the halting problem: there is then no algorithm that can decide the corresponding dynamical question for all possible inputs in finite time.

Within this program, billiards present a stringent test case. Moore suggested that billiard systems in sufficiently high dimension, most notably three-dimensional billiards, could support universal computation, while also arguing that genuinely low-dimensional systems might lie below the universality threshold \cite{Moore1,Moore2}. Subsequent work achieved Turing completeness for billiard-like models by adding dynamical ingredients such as many-ball systems \cite{FredkinToffoli}, three-dimensional walls \cite{ReifTygar1994}, and moving walls or internal memory encoded in the particle’s velocity, for example, in computational pinball machines \cite{pinball}. \emph{Here we show that none of these additional mechanisms is necessary: classical planar billiards with one particle and fixed walls already realize universal computation.} 

We take inspiration from the construction of Topological Kleene Field Theory and adapt it to billiard flows. More precisely, for each Turing machine, we construct a two-dimensional billiard table whose trajectories simulate its computation. As a consequence, natural decision problems for planar billiards, including reachability and periodicity questions, are algorithmically undecidable. Our approach is based on an encoding that is compatible with billiard geometry. We encode computation states as points of a one-dimensional interval and let the tape head move rather than shifting the entire tape. In our realization, the shift arises naturally from reflections on parabolic walls, while the read–write operation is more involved but remains fully compatible with a physical billiard system. A key geometric point is that the billiard tables arising in our construction are not polygonal and should not be thought of as having finitely many linear walls. The read--write operations are implemented by special control walls with an oscillatory profile. These walls are organized into a countable family indexed by the head position $k\in\mathbb Z$, but the resulting billiard table is nevertheless \emph{finitely piecewise smooth}\footnote{Throughout this paper, ``smooth'' is understood to mean $C^\infty$.}: away from finitely many singular boundary points, each control wall is obtained by smoothly joining together countably many explicit linear segments into a single smooth arc. Thus the construction requires infinitely many local oscillations, but only finitely many singularities in the boundary of the billiard table.

Beyond its intrinsic mathematical interest, Turing completeness in planar billiards has broader implications for classical mechanics. Since billiards arise as hard wall limits of smooth Hamiltonian systems, undecidability in billiards points to intrinsic algorithmic limits in steep potential Hamiltonian dynamics. Billiard-type descriptions also appear in several physical regimes, including hard sphere gases and collision-dominated limits in celestial mechanics. In particular, models arising near close encounters in celestial mechanics, including variants of the three-body problem, often admit effective billiard descriptions. Our results therefore point to the possibility that collision dominated regimes may exhibit qualitative behaviors that are not algorithmically decidable, even when the governing equations are deterministic. This suggests that undecidability may enter classical celestial mechanics alongside chaos, not as another form of sensitive dependence on initial conditions, but as a distinct obstruction to prediction. Such algorithmic barriers could affect the mathematical formulation of long-term questions concerning stability, escape, and ejection in planetary systems.

\noindent\textbf{Outline.}
Section 2 reviews billiards, Turing machines and simulation of Turing machines by dynamical systems. Section 3 presents the construction of billiards associated with a Turing machine and proves Turing completeness. Section 4 discusses consequences and physical interpretations.
Section 5 presents some conclusions.

\textbf{Acknowledgements:} We thank FIM and ETH Zürich for providing an ideal environment for this collaboration. We are also grateful to Stephen Wolfram for his visit in October 2025, which inspired several related questions that ultimately led to this work. We further thank Ángel González-Prieto for insightful discussions that shaped our encoding of the computational states of a Turing machine into a one-dimensional interval. We also thank John Baez, Robert Cardona, Urs Frauenfelder, Joel David Hamkins and Daniel Peralta-Salas  for their interest. Last but not least, we are grateful to the referees for their careful and thoughtful reading of the manuscript. Their constructive comments, including those that challenged the scope and presentation of our initial submission, helped us refine the structure of the article and arrive at a significantly stronger version. In particular, one of the pathological examples presented in Remark \ref{rmk: example referee} was suggested by one of the referees.

\section{Preliminaries}

\subsection{Billiards}
\label{sec:billiards-Rn}
Planar billiards describe the motion of a point particle undergoing free flight
\[
\ddot q(t)=0
\]
inside a domain \(B \subset \mathbb{R}^2\), with elastic reflections at the boundary:
\[
v^+ = v^- - 2\langle v^-, n\rangle n,
\]
where \(n\) denotes the inward unit normal to \(\partial B\) at the point of impact, and
\(v^-\) and \(v^+\) are the incoming and outgoing velocities, respectively. Thus the speed
is preserved and the angle of incidence equals the angle of reflection.

The billiard dynamics can be formulated as a dynamical system. Its natural phase space is
\[
\mathcal P_B=\{(q,v)\in B\times \mathbb R^2:\ |v|=1\},
\]
together with the boundary identifications prescribed by the elastic reflection law. Equivalently,
one may view the phase space as \(B\times S^1\), away from the boundary identifications. The
billiard flow
\[
\varphi_t:\mathcal P_B\longrightarrow \mathcal P_B
\]
is obtained by following the free motion \(q(t)=q_0+t v_0\) until the first collision with
\(\partial B\), applying the reflection rule, and iterating this procedure. At singular points
of the boundary, such as corners or points where the normal is not defined, the flow may fail
to be defined. Accordingly, \(\varphi_t\) is naturally considered on the subset of phase space
consisting of trajectories that avoid such singularities.

Despite their elementary definition, billiards arise ubiquitously in physics as exact or
effective models for systems with hard constraints, singular interactions, or piecewise-smooth
dynamics. Depending on the geometry and regularity of the boundary \(\partial B\), one
distinguishes several important classes of billiards, such as polygonal billiards with piecewise
linear boundary and smooth billiards, including elliptic tables. More generally, one often
assumes that \(\partial B\) is \emph{finitely piecewise smooth}, namely smooth except for
finitely many singular points. In this article, we work within this general setting and do not
restrict ourselves to polygonal or convex billiards.

\subsection{Turing machines}
This celebrated computability model captures the essence of a computer from a mathematical point of view. Due to its intuitive formulation, it has become one of the most standard and widely adopted models of computation. Formally, a (binary) Turing machine consists of a tuple
\[
M = (Q, q_0, Q_{\mathrm{halt}}, \delta),
\]
where $Q$ is a finite set, whose elements are the \emph{states} of $M$, $q_0 \in Q$ is the \emph{initial state}, $Q_{\mathrm{halt}} \subsetneq Q$ is the subset of \emph{halting states}, and
\[
\delta : Q \times \mathcal{A} \to Q \times \mathcal{A} \times \{-1,1\}
\]
is the \emph{transition function}. Here $\mathcal{A} = \{0,1\}$ is the tape alphabet.

From this information, we can define a dynamical system. Let $\Lambda \subset \mathcal{A}^{\mathbb{Z}}$ be the set of finitely supported two-sided sequences 
$t : \mathbb{Z} \to \mathcal{A} = \{0,1\}$. We refer to each $t \in \Lambda$ as a \emph{tape state}. The set of computation states $\mathcal{S}$ consists of words of the form
\[ \mathcal{S}=\{ (t,q,k) \in \Lambda \times Q\times \Z \}\]
where $t \in \Lambda$ is a tape state, $q \in Q$ is the current internal state of the machine and $k\in \Z$ is the position of the head\footnote{Notice that in previous works \cite{Moore1,Moore2,CardonaMirandaPeraltaPresas,CardonaMirandaPeralta_euclidean,GonzalezMirandaPeralta_TKFT,DyhrGonzalezMirandaPeralta_Navier}, the head position is not treated as part of the Turing machine's state.  These definitions fix the head at position \(0\) and simulate head moves by shifting the entire tape.  We adopt the more intuitive approach of including the head position explicitly, which simplifies our encoding into a billiard system.}. At certain points in our construction, when $q$ is not relevant, we will abuse terminology and refer to $(t,k)$ alone as the computation state. We then define
\[
\Delta_M : \mathcal{S} \to \mathcal{S}
\]
by the following rule. Given a computation state $(t,q,k)$, the Turing machine evaluates $\delta(q, t_k) = (q', s, \varepsilon)$ and sets $\Delta_M(t,q,k) = (t', q',k+\varepsilon)$, where the new tape $t'$ satisfies
\[
t'_n = t_{n} \quad \text{for } n \neq k, 
\qquad 
t'_{k} = s.
\]

The map $\Delta_M$ encodes the intuitive idea that $t$ represents a two-sided tape, and $M$ has a read--write head currently positioned at cell $k$. At each iteration, the head reads the symbol underneath it, and based on the present internal state $q$ of the machine, it writes a new symbol $s$ on the tape, moves either left ($\varepsilon = -1$) or right ($\varepsilon = 1$), and updates the internal state to $q'$. In this way, for a given initial tape $t$, the map $\Delta_M$ generates a dynamical system on $\mathcal{S}$ starting from $(t, q_0,0) \in \mathcal{S}$. If, at some point, the system reaches a configuration $(t', q',k')$ with $q' \in Q_{\mathrm{halt}}$, the computation is said to halt.

In the following section, we will see that the reversibility condition is essential for building well-structured dynamical systems capable of simulating universal computation. The reversibility of a Turing machine can be defined in several equivalent ways. For our purposes, the formulation based on the global transition function $\delta$ is the most convenient.

\begin{definition}
A Turing machine $M = (Q, q_0, Q_{\mathrm{halt}}, \delta)$ is \emph{reversible} if the global transition function $\Delta_M$ is injective.
\end{definition}

In the seminal paper \cite{Bennet73}, Bennett demonstrated that any computation can be carried out by a reversible Turing machine, as stated in the following theorem.
\begin{theorem}\label{thm: reversibility}
    \cite{Bennet73} For every Turing machine, there exists an equivalent reversible Turing machine.
\end{theorem}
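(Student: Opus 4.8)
The plan is to make the machine reversible by \emph{recording its computation history}, following Bennett's original argument. The point is to isolate the two sources of irreversibility of $\Delta_M$ and remove both by never discarding information. First, the transition function $\delta$ may fail to be injective: distinct pairs $(q,a)$ can produce the same $(q',s,\varepsilon)$, so from a successor configuration one cannot recover its predecessor. Second, even when $\delta$ is injective, overwriting a cell destroys the symbol that was there, again erasing the past. Both are cured by logging, at every step, exactly which transition was used on an auxiliary \emph{history track} that is only ever appended to during the forward run.

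Concretely, I would build a machine $M'$ operating on three interleaved tracks of a single tape (so as to match the single-tape global map $\Delta_{M'}$ of the Definition): a work track carrying the simulated computation, a history track, and an output track. The construction proceeds in three phases controlled by dedicated phase markers. In the \emph{compute} phase, $M'$ simulates $M$ step by step on the work track, and at the $i$-th step writes onto the history track a token encoding precisely which transition of $\delta$ was applied; the state set and alphabet of $M'$ are enlarged so that a configuration together with the last history token determines the previous configuration uniquely, which makes this phase injective. Upon reaching a halting state, the \emph{copy} phase writes the result from the work track onto the (blank) output track; copying onto blank tape is manifestly reversible, since source and target determine each other. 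Finally, the \emph{retrace} phase reverses the compute phase: it reads the history track from its end toward the beginning, undoing each recorded transition on the work track and erasing the token as it goes, until the work track again holds the original input and the history track is empty. The output track retains the answer, so $M'$ halts in a clean configuration computing the same partial function as $M$.

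Reversibility then follows because each phase is individually injective as a map on configurations, and the phase markers separate the ranges of the three phases so that no configuration in one phase can be the successor of a configuration in another. The net claim is that the induced $\Delta_{M'}$ is injective on all of $\mathcal{S}$, i.e. $M'$ is reversible in the sense of the Definition, and it is equivalent to $M$ in that it computes the same function.

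The hard part will be verifying injectivity \emph{across} phase boundaries rather than merely within each phase: one must design the phase-transition logic so that the step entering the copy phase, and the step entering the retrace phase, cannot collide with any compute- or retrace-phase step. The delicate bookkeeping is concentrated in arranging $\delta'$ so that the forward simulation is not only deterministic but \emph{backward-deterministic given the history}, which is exactly what the history tokens guarantee; this is where enriching $\delta$ to record enough information does the real work. Reducing the three-track description to a genuine single-tape machine in the paper's model (by interleaving tracks and adjusting head motion) is routine, but it must be checked that this encoding preserves injectivity of the global transition map.
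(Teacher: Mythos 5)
Your proposal is correct and follows exactly the argument of Bennett's 1973 paper, which is what the paper itself invokes: the statement is quoted there with a citation to \cite{Bennet73} and no independent proof, and Bennett's proof is precisely the compute--copy--retrace construction with a history track that you describe. The caveats you flag (cross-phase injectivity and the reduction from multiple tracks to the single-tape model) are the same technical points Bennett's quadruple formalism and phase bookkeeping are designed to handle, so nothing in your outline diverges from the standard argument.
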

Hence, focusing exclusively on reversible Turing machines does not impose any real restriction.

There is a geometric perspective for interpreting Turing machines as graphs, commonly called \emph{finite state machines}. Consider a Turing machine
\[
M = (Q, q_0, Q_{\mathrm{halt}}, \delta).
\]
We associate to $M$ a finite state machine $\mathcal{G}_M$, defined as the following labelled directed graph:

\begin{itemize}
    \item The vertices of $\mathcal{G}_M$ are the states $Q$ of $M$.

    \item Each non-halting state $q \notin Q_{\mathrm{halt}}$ has two outgoing edges connecting to the new states determined by the transition function, 

    \item The halting states in $Q_{\mathrm{halt}}$ are represented as vertices with no outgoing edges.
\end{itemize}
The finite state machine $\mathcal{G}_M$ corresponding to the Turing machine $M$ thus provides a graphical representation of the evolution of $M$’s internal states during a computation. The computation begins at the initial vertex $q_0$ and, at each step, the system moves along the edge determined by the symbol currently read on the tape, updating the tape and shifting the head left or right according to the label of that edge. As we will explain in the proof of the main theorem, we will use this geometrical representation to encode any Turing machine as a billiard flow.

\begin{remark}\label{rem:normalizations}
For the remainder of the paper, we may assume, without loss of generality, that the Turing machines under consideration satisfy the following properties (see also, \cite{GonzalezMirandaPeralta_TKFT}):
\begin{enumerate}
    \item they are reversible;
    \item they have a unique halting state $q_f$;
    \item no transition enters the initial state $q_0$.
\end{enumerate}
The first assumption follows from Theorem~\ref{thm: reversibility}. The other two are convenient normalizations of the finite-state machine. If a reversible Turing machine has several halting states, one may attach a finite auxiliary gadget that funnels all of them into a single distinguished halting state. Similarly, if there are transitions entering the initial state, one may introduce a new state $q_0'$ that receives all incoming edges of $q_0$, and connect $q_0$ to $q'_0$ by an auxiliary edge. These modifications alter only the presentation of the machine and not the partial function that it computes.
\end{remark}

\subsection{Simulation of Turing machines by dynamical systems}

We now make precise what it means for a dynamical system to simulate a
Turing machine, and in particular what it means for a dynamical system to be
Turing complete. The guiding point is simple: a Turing machine already defines
a discrete dynamical system. The question is then which other dynamical
systems, either discrete or continuous, can reproduce this computation inside
their own phase space.

The definitions below are inspired by the classical framework of symbolic
dynamics and generalized shifts, where computational states are represented by
points of a phase space and the evolution of the system reproduces the
evolution of the computation
\cite{Moore1,Moore2,tao1,CardonaMirandaPeraltaPresas,
CardonaMirandaPeralta_euclidean,BrueraCardonaMirandaPeraltaSalasSalo2024,
GonzalezMirandaPeralta_TKFT,DyhrGonzalezMirandaPeralta_Navier}.

We begin with the elementary notion of a dynamical system. A
\emph{discrete dynamical system} on a topological space $X$ is a continuous
map
\[
F:X\to X .
\]
A \emph{continuous dynamical system} on $X$ is a family of maps $\varphi_t:X\to X,\qquad t\in\mathbb R,$ depending continuously on $t$, such that
\[
\varphi_0=\operatorname{id}_X,
\qquad
\varphi_{t+s}=\varphi_t\circ\varphi_s
\]
for all $s,t\in\mathbb R$.

Let $M=(Q,q_0,Q_{\mathrm{halt}},\delta)$ be a Turing machine. Its space of configurations is $\mathcal{S}=\Lambda\times Q\times \mathbb Z ,$ where $\Lambda\subset \{0,1\}^{\mathbb Z}$ is the set of finitely supported
tapes, $Q$ is the finite set of internal states, and the last factor records
the position of the head. Thus a point $(a,q,k)\in \mathcal{S}$ records the complete
instantaneous state of the computation: the tape $a$, the internal state
$q$, and the head position $k$. We denote by
$\Delta_M:\mathcal{S}\to \mathcal{S}$ the global transition map of the machine.

We regard $\mathcal{S}$ as a topological space in the natural way. The sets $Q$
and $\mathbb Z$ are endowed with the discrete topology, while $\Lambda$ is
endowed with the metric
\[
d_\Lambda(a,a')=\sum_{n\in\mathbb Z}
\frac{|a_n-a'_n|}{2^{|n|+1}} .
\]
Equivalently, $\mathcal{S}$ may be endowed with the metric
$d\bigl((a,q,k),(a',q',k')\bigr)
=
d_\Lambda(a,a')
+
\delta_Q(q,q')
+
\delta_{\mathbb Z}(k,k') ,
$
where $\delta_Q$ and $\delta_{\mathbb Z}$ are the discrete metrics. With
this topology, the global transition map
$\Delta_M:\mathcal{S}\to \mathcal{S}$
is continuous. Hence every Turing machine naturally determines a discrete
dynamical system $\Delta_M$ on $\mathcal{S}$.

We can now ask when another dynamical system reproduces this intrinsic
configuration dynamics. The idea is that each computational state is encoded as a point of a phase space, and that the evolution of the ambient dynamical system advances these encoded points exactly as the Turing machine advances
its configurations. 

A crucial aspect of any notion of simulation is that the encoding of
computational states into the phase space of the dynamical system should itself
be effective. Otherwise, the computational content of the Turing machine could
be hidden in the encoding rather than being generated by the
dynamics. For this reason, we require the encoding map to be \emph{constructible}. First, recall that a point $x\in X$ of a smooth manifold embedded in Euclidean space\footnote{We restrict attention to smooth manifolds embedded in Euclidean space, which is sufficient for the billiard systems considered here and provides a direct notion of what it means for a Turing machine to output a real number. More intrinsic formulations are possible: in particular, computable points can be defined in the general framework of \emph{computable metric spaces}. See \cite{rojas}.} is said to be
\emph{computable} if there exists an algorithm that, given $n\in\mathbb{N}$,
produces an approximation of $x$ with error at most $2^{-n}$. Here, we go beyond this notion and ask the encoding to be constructible, in the sense that there exists an algorithm that can output the exact coordinates for each point of the encoding in a finite amount of time. 

We can now ask when another dynamical system reproduces this intrinsic
configuration dynamics. The idea is that each computational state is encoded as
a point of a phase space, and that the evolution of the ambient dynamical
system advances these encoded points exactly as the Turing machine advances its
configurations.

\begin{definition}\label{def: discrete system}
Let $X$ be a smooth manifold embedded in Euclidean space and $F$ be a discrete dynamical system on it. We say that $F$ \emph{simulates} the Turing machine $M$ if there exists a continuous map $\Phi:S\hookrightarrow X$ such that
\[
F\circ\Phi=\Phi\circ\Delta_M.
\]
Moreover, we require $\Phi$ to admit a constructible description, in the sense that there exists a Turing machine that outputs the exact coordinates of $\Phi(s)$ for each $s\in S$ in a finite amount of time.
\end{definition}

In particular,

\[
F^n(\Phi(s))=\Phi(\Delta_M^n(s))
\]

for every computational state $s\in S$ and every $n\geq0$. Thus, the restriction of the dynamical system to the invariant subset $\Phi(S)$ replicates the dynamics of the Turing machine.

For continuous-time systems, it is often too restrictive to require that a fixed amount of time corresponds to one computational step. Indeed, in many physical systems, such as billiard flows we describe later, different transitions may require different amounts of time. We therefore adopt the following more flexible notion.

\begin{definition}\label{def: cont simulation}
Let \(X\) be a smooth manifold embedded in Euclidean space, and let
\(\varphi_t\) be a continuous dynamical system on \(X\). We say that
\(\varphi_t\) simulates the Turing machine \(M\) if there exist an
injective continuous map $\Phi\colon S \hookrightarrow X$ and a computable function $\tau\colon S \longrightarrow (0,\infty)$
such that
\[
\varphi_{\tau(s)}(\Phi(s))=\Phi(\Delta_M(s))
\]
for every computational state \(s\in S\). As before, we require that
\(\Phi\) admit a constructible description, in the sense of Definition~2.

Iterating this relation, if \(t_0=0\) and
$t_{n+1}=t_n+\tau(\Delta_M^n(s)),$
then $\varphi_{t_n}(\Phi(s))=\Phi(\Delta_M^n(s))$ for all \(n\geq 0\). Moreover, we require that $\varphi_t(\Phi(s))\notin \Phi(S)$, for all \(t\notin\{t_0,t_1,t_2,\ldots\}\).
\end{definition}

We finally recall the notion of universality. A Turing machine $U$ is called
\emph{universal} if, for every Turing machine $M$ and every input $w$, there
is an effective encoding $\langle M,w\rangle\in S_U$
such that the computation of $U$ starting from $\langle M,w\rangle$
reproduces the computation of $M$ on input $w$. In particular, 
$U \text{ halts on } \langle M,w\rangle$
if, and only if, $M \text{ halts on } w.$

\begin{definition}\label{def: Turing completeness}
A discrete dynamical system is called \emph{Turing complete} if it simulates a
universal Turing machine.

A continuous dynamical system is called \emph{Turing complete} if it simulates
a universal Turing machine in the variable time sense defined above.
\end{definition}

An important consequence of these definitions is that algorithmic properties of universal Turing machines are transferred to the dynamical system. Let
\[
H:=\Phi(S_H)
\]
denote the image of the halting states of a universal Turing machine $M$. Since $\Phi$ is constructible, every input
$t$ can be effectively translated into the point
$\Phi(s_t)\in X$, where $s_t$ is the initial computational state associated
with the input $t$.

Suppose now that $(X,F)$ or $(X,\varphi_t)$ simulates the universal Turing
machine $M$. Then
\[
\Delta_M^n(s_t)\in S_H
\]
for some $n\geq 0$
if and only if
\[
F^n(\Phi(s_t))\in H
\]
in the discrete-time case, or equivalently
\[
\varphi_{t_n}(\Phi(s_t))\in H
\]
for some $n\geq0$ in the continuous-time case. Therefore, deciding whether the
orbit of a computable point ever reaches the halting set is equivalent to
deciding whether the corresponding Turing machine halts on the given input.

Since the halting problem is undecidable, it follows that no algorithm can
decide, in general, whether a trajectory starting from a computable initial
condition eventually reaches the halting set. Thus the simulation of a
universal Turing machine immediately yields dynamical systems exhibiting
undecidable orbit properties.

\begin{remark}\label{rmk: example referee}
    In the discussion of undecidable dynamics, it is essential that our notion of simulation reproduces the computation step by step, rather than merely encoding its final input--output behavior. Otherwise, undecidability could be hidden in the construction of the dynamical system itself instead of emerging from its dynamics.

To illustrate this point, consider a Turing machine $M$. Suppose we are allowed to construct a billiard table after determining whether $M$ halts on any given input $t$. If $M$ halts on $t$, we arrange the walls so that the trajectory starting from a computable initial condition associated with $t$ is reflected into a designated output region. If $M$ does not halt on $t$, we instead arrange the walls so that the same initial condition enters another region of the billiard table and stays there for the rest of its trajectory. In this way, deciding whether the orbit reaches the designated output region becomes equivalent to the halting problem. However, such a construction does not provide a meaningful dynamical realization of computation. The computation is never carried out by the dynamical system itself; rather, the answer has already been encoded into the geometry of the billiard table. Thus, the source of undecidability lies in the description of the system, not in its evolution.

A similar issue arises with the encoding map $\Phi$ if one merely requires it to be computable rather than \emph{constructible}. Indeed, if $\Phi$ is only computable in the sense of computable analysis, the computation can still be hidden in the encoding instead of the dynamics. For example, one could encode a computational state $s$ by its entire future orbit
\[
\Phi(s)=(s,\Delta_M(s),\Delta_M^2(s),\ldots),
\]
and let the dynamical system be the left shift on such sequences. Then the relation
\[
F\circ\Phi=\Phi\circ\Delta_M
\]
still holds, so the system formally simulates the Turing machine. However, the computational complexity is no longer generated by the dynamics, since the entire future evolution of the machine has already been encoded in the initial condition. This construction satisfies the usual notion of computability because the space of sequences can be endowed with a metric in which the contribution of later coordinates decays rapidly, so that arbitrarily accurate approximations of $\Phi(s)$ require only finitely many iterates of $\Delta_M$. For this reason, we impose the stronger effectiveness condition that, for every finitely supported computational state $s$, the encoded point $\Phi(s)$ admits an exact finite description that can be produced by a Turing machine in finite time. This prevents undecidability from being hidden in the encoding itself, as observed already in \cite{CardonaMirandaPeraltaPresas}.

Consequently, a genuine simulation requires both a constructible encoding of every computational state and a dynamical evolution that reproduces each transition of the Turing machine. In this setting, undecidability genuinely arises from the long-term behavior of the dynamics and cannot be attributed to information hidden in the construction.
\end{remark}

Examples of Turing complete dynamical systems include piecewise linear maps of the square, polynomial differential equations, mechanical systems with tailored potentials, cellular automata and hydrodynamic flows \cite{Moore1,Moore2,GracaCampagnoloBuescu_Computability,BournezGracaHainry,Wolfram_CA,Wolfram_Fluids,tao1,CardonaMirandaPeralta_conjectureMoore, CardonaMirandaPeralta_survey, CardonaMirandaPeraltaPresas, CardonaMirandaPeralta_universality, CardonaMirandaPeralta_euclidean, GonzalezMirandaPeralta_universality, BrueraCardonaMirandaPeraltaSalasSalo2024}. In the following section we prove that planar classical billiards are also capable of simulating any Turing machine, and hence they are Turing complete in the sense of Definition \ref{def: Turing completeness}.

\section{Turing completeness of two-dimensional billiards}
In this section we state our main theorem on two-dimensional billiards, namely that for each (reversible) Turing machine, there exists a two-dimensional billiard system that simulates it in the sense of Definition \ref{def: cont simulation}. Then, we sketch our proof, adapted from the Topological Kleene Field Theory construction \cite{GonzalezMirandaPeralta_TKFT}, and later develop the tools needed for the full argument.

The statement of our main theorem is the following:
\begin{theorem}\label{thm: main result}

Let \(M=(Q,q_0,Q_{\mathrm{halt}},\delta)\) be a reversible Turing machine. Then there exists an explicitly constructible bounded
planar billiard table with finitely piecewise smooth boundary \(B\) whose billiard flow simulates $M$ in the sense of Definition \ref{def: cont simulation}. In particular, there are embeddings $\Phi_q:I\hookrightarrow B,$ one for each
state $q\in Q$, such that:
\begin{enumerate}

\item The image \(\Phi_{q_0}(I)\) is contained in \(\partial B\). For every computation state \((t,q_0,k)\), the point $\Phi_{q_0}(x_{t,k})$
is a rational point of the boundary and determines a billiard trajectory by launching the particle orthogonally to \(\partial B\).

\item If
\[
(t_n,q_n,k_n)=\Delta_M^n(t_0,q_0,k_0),
\]
then the billiard trajectory starting orthogonally from
$\Phi_{q_0}(x_{t_0,k_0})$ passes successively through the points $\Phi_{q_n}(x_{t_n,k_n})$ for $n=0,1,2,\ldots.$

\item The correspondence
\[
(t,q,k)\longleftrightarrow \Phi_q(x_{t,k})
\]
is finitely computable and bijective onto its image. In particular, the tape contents, the internal state, and the head position can be algorithmically recovered from the position of the billiard particle whenever it passes through one of the sections \(\Phi_q(I)\).
\item The image \(\Phi_{q_f}(I)\) of the halting state is contained in \(\partial B\), and the computation halts if and only if the corresponding billiard trajectory reaches \(\Phi_{q_f}(I)\).
\end{enumerate}
\end{theorem}

To prove this theorem, we will adapt the construction of computational bordisms within the framework of Topological Kleene Field Theory \cite{GonzalezMirandaPeralta_TKFT} to the setting of billiards, associating to each (reversible) Turing machine a billiard table whose relevant billiard trajectories mirror the machine's computations.

In this proof, we deeply exploit an alternative representation of a Turing machine as a finite state machine. As we previously discussed, given a Turing machine $M$, we can express
it as a directed finite graph where its nodes correspond to the states of $M$
and the edges represent the simple operations performed by the machine at each
step. We then transform the graph representing \(M\) into a billiard table. Using Cantor-based encodings, we can algorithmically associate to each tape state $t\in \Lambda$ and head position $k\in \Z$, a rational point $x_{t,k}\in I=[0,1]$. For each state \(q_i\), we fix a segment \(\Phi_i(I)\subset B\), where \(\Phi_i:I\hookrightarrow B\) is a rational translation of the unit interval into the billiard table, chosen so that these segments are transverse to the relevant billiard trajectories. In this sense, the billiard trajectory passes through the point \(\Phi_i(x_{t,k})\) precisely when the current computation state is \((t, q_i, k)\). In particular, $\Phi_0(I)$, which corresponds to the starting states of the Turing machine, will be contained on $\partial B$. Each edge of the graph is represented by a ``corridor'' in the billiard table connecting two such segments.  Along these segments, the billiard dynamics reproduce the operations encoded by that edge, and accordingly transform the relevant beam of billiard trajectories.

See Figure~\ref{fig: graph to billiard} for a sketch of the billiard associated with a simple Turing machine.

\begin{figure}[h]
    \centering
    \includegraphics[width=0.9\linewidth]{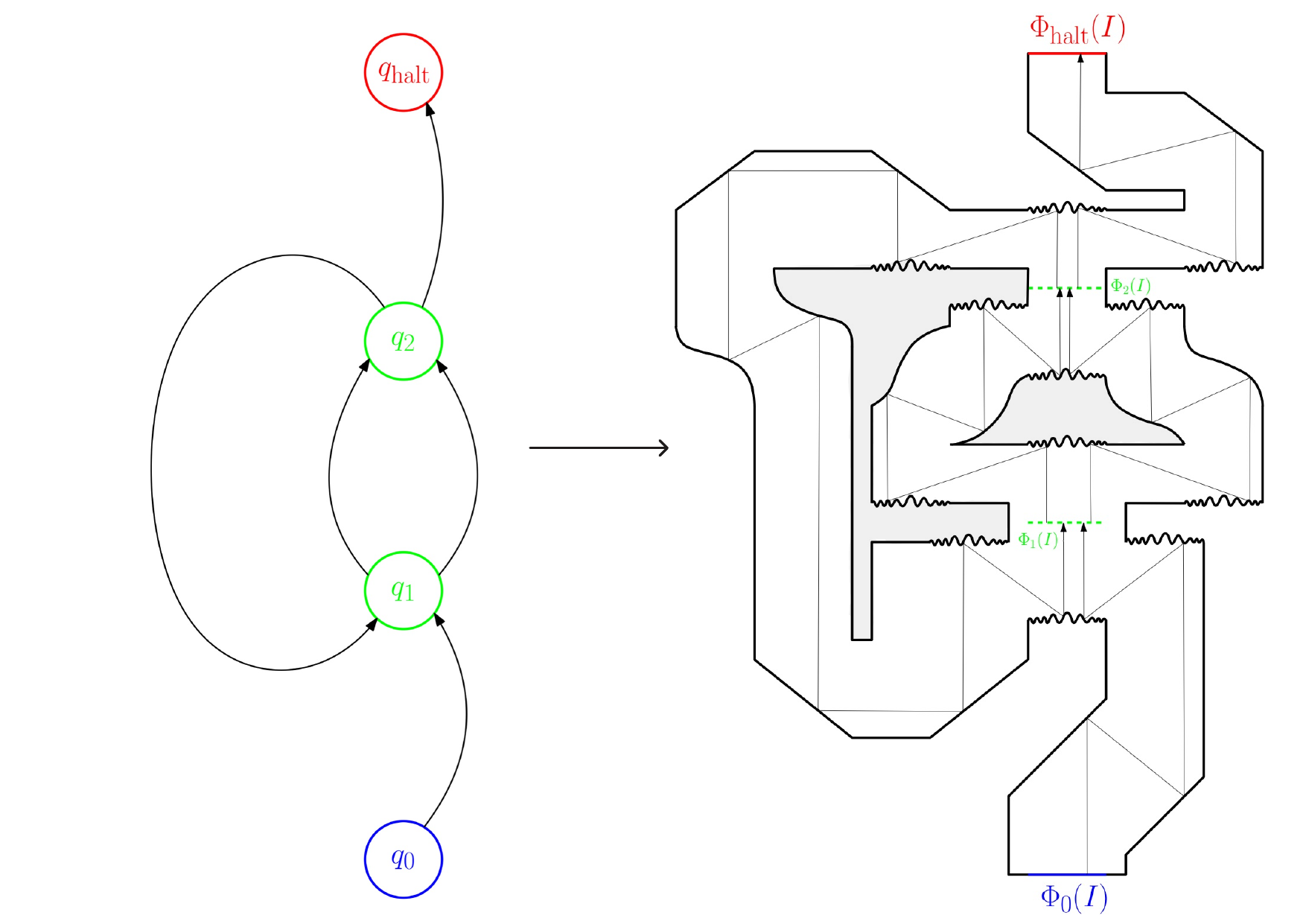}
    \caption{Transforming the graph representation of a Turing machine into a billiard.}
    \label{fig: graph to billiard}
\end{figure}
\begin{remark}
    Notice that using this proof method, the billiard table constructed has the graph of the associated Turing machine as a strong deformation retract. In particular, our construction requires a billiard table with a non-trivial topology, where we place an obstacle in the billiard for each of the generators of the fundamental group of the graph. In this context, one may ask the following question: is it possible to construct a billiard associated Turing machine $T$ whose number of obstacles is strictly less than the first Betti number of the graph corresponding to $T$? This is reminiscent of the connection between computational complexity and topological complexity. See also \cite{GonzalezMirandaPeralta_TKFT}.
\end{remark}

\subsection{Cantor-based encodings}
The standard ternary Cantor set is the set of real numbers $x \in \mathbb{R}$ that 
can be written as
\[
x = \sum_{i=1}^{\infty} \epsilon_i 3^{-i}, \qquad \epsilon_i \in \{0,2\}.
\]
This Cantor set provides a way to encode sequences of natural numbers on the unit interval.

Following this idea, we introduce a convenient map that encodes tape states over the alphabet $\mathcal{A} = \{0,1\}$ as points of the closed interval $I = [0,1] \subset \mathbb{R}$. Let 
$t = \{t_n\}$ be a tape state with $t_n \in \mathcal{A}$ for all $n \in \Z$. We assign to $t$ the point in the Cantor set
\[
x_t =2\sum_{n=0}^\infty t_{n}3^{-(2n+1)}+2\sum_{n=1}^{\infty} t_{-n}3^{-2n}=2(t_03^{-1}+t_{-1}3^{-2}+t_13^{-3}+\cdots).
\]
Here, we have used the elements of $t$ as the ternary digits of our point in $I$, by alternating elements of the positive half of tape, $\{t_n\}_{n\geq 0}$, with those of the negative half, $\{t_n\}_{n<0}$. In other words, the right- and left-hand sides of the tape $t$ are encoded in the odd and even digits of the ternary expansion of $x_t$, respectively. Notice that since $t$ is finitely supported, the resulting number $x_t$ is rational, and hence constructible.

We would also like to add somehow the information about the position of the head into this encoding. In order to do so, we will embed a smaller copy of $I$ into itself for each $k\in \Z$ via an affine map $\tau_k:I\hookrightarrow I$. By having the length of $I_k:=\tau_k(I)$ decrease fast enough as $|k|$ goes to infinity, we can construct these embeddings so that the intervals $I_k$ are pairwise disjoint. For example, we can take the following maps $\tau_k:I\hookrightarrow I_k\subseteq I$:
    \[
        \tau_k(x) = 
        \begin{cases}
        3^{-(1-k)}(1+x), & k < 0, \\\\
        1+3^{-(1+k)}(-2+x), & k \geq 0.
        \end{cases}
    \]
This construction is illustrated in Figure~\ref{fig: embedding interval}. Using these intervals, we can encode the tape $t$ when the head state is at position $k$ as the constructible point $x_{t,k}:=\tau_k(x_t)\in[0,1]$.
\begin{figure}[h]
    \centering
    \includegraphics[width=0.8\linewidth]{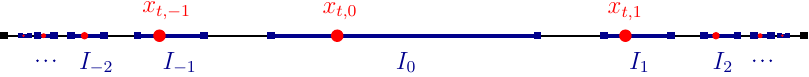}
    \caption{Encoding into the one-dimensional interval. Each interval represents a position of the head and the tape state is represented by a point in the interval. }
    \label{fig: embedding interval}
\end{figure}

\subsection{Computational operations as billiard dynamics}
In this section we show that the basic operations performed along each edge of the finite state machine, shifting the head and reading or writing a symbol, can be realized through billiard-like dynamics.  We will use these dynamics in the corridors that represent each transition of the Turing machine.

\begin{lemma}\label{lemma: shift}
    The shift operation, in which the head moves one cell left or right, can be realized through billiard-like dynamics. This means that there exists an explicit billiard wall whose induced dynamics are conjugate to the shift operation of a Turing machine.
\end{lemma}

\begin{proof}
    Using our encoding of the computation states into the one-dimensional interval, shifting the head becomes equivalent to mapping the points $x_{t,k}$ to $x_{t,k+\varepsilon}$ with $\varepsilon=-1,1$. We can assume without loss of generality that $\varepsilon=1$, and prove that we can find some explicit billiard walls that implement these dynamics.
    
    We will use the explicit expression for the embeddings $\tau_k:I\hookrightarrow I_k\subseteq I$:
    \[
        \tau_k(x) = 
        \begin{cases}
        3^{-(1-k)}(1+x), & k < 0, \\\\
        1+3^{-(1+k)}(-2+x), & k \geq 0.
        \end{cases}
    \]
    Notice that using these embeddings, each interval $I_k$ has length $3^{-(1+|k|)}$ and they are all disjoint. Moreover, for any tape $t$
    \[
    x_{t,k+1} =
        \begin{cases}
        3x_{t,k}, & \text{if } k < 0, \\\\
        x_{t,k}/3 + 2/3, & \text{if } k \ge 0.
        \end{cases}
    \]
    These transformations are affine and can be realized using billiard walls shaped like parabolas sharing a common focus, as suggested in \cite{Moore1} for the three-dimensional setting and illustrated in Figure~\ref{fig: shift}.
    \begin{figure}[h]
        \centering
        \includegraphics[width=0.75\linewidth]{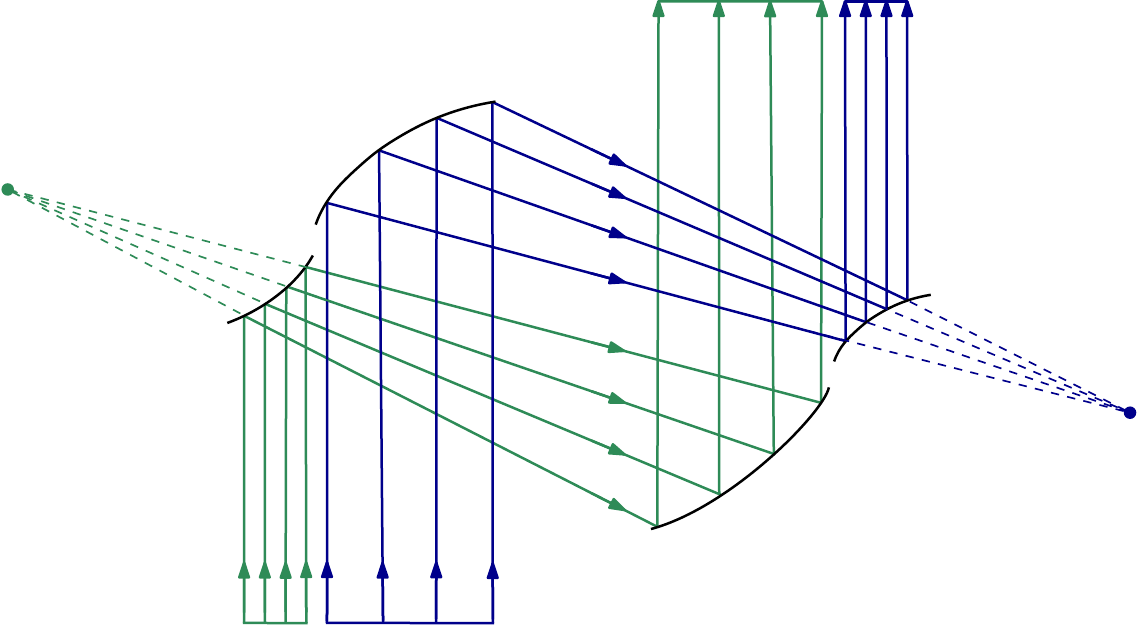}
        \caption{Billiard dynamics implementing a right shift. The green and blue rays correspond to the intervals $I_k$ with $k<0$ and $k\geq0$, respectively.}
        \label{fig: shift}
    \end{figure}
\end{proof}

    \begin{lemma}\label{lemma: read--write}
The read--write operation of a Turing machine can be implemented by a billiard. More precisely, there exists an explicit billiard table whose billiard dynamics are conjugate to the corresponding read--write operation.
\end{lemma}

\begin{proof}
    Let \(\delta\) be the transition function of a Turing machine. Let us consider a computation state encoded by a point \(x_{t,k}\in I\), where \(t=(t_n)_{n\in\mathbb{Z}}\) is the tape content and \(k\in\mathbb{Z}\) is the head position. Suppose that
    \[
        \delta(q,t_k)=(q',s,\varepsilon),
    \]
    so that the read--write operation consists in replacing the symbol \(t_k\) by \(s\), while keeping all other symbols unchanged. Then, at the level of the encoding, the read--write operation is equivalent to the mapping
    \[
        x_{t,k}\longmapsto x_{t',k},
    \]
    where \(t'_n=t_n\) for \(n\neq k\) and \(t'_k=s\).

    Notice that in other embeddings of Turing machines into dynamical systems, such as those in \cite{Moore1,Moore2}, where the tape is encoded into a two-dimensional Cantor set, the head is assumed to be always at position \(0\) of the tape. Therefore, the read--write operation involves only the leading ternary digit of the associated point. In our case, however, the symbol $t_k$ that we need to read and possibly modify is stored in a specific digit of the ternary expansion of $x_{t,k}$, namely, in the \((2k+1)\)-th or \((-2k)\)-th digit when \(k \geq 0\) or \(k < 0\), respectively. Thus, on the interval \(I_k\subset I\) associated with head position \(k\), the admissible computation states split into two alternating families of subintervals, each of them associated with the symbol \(0\) or \(1\)  at position \(k\). As \(|k|\) increases, the number of these intervals grows, while their lengths decrease.
    
    The billiard wall implementing the read--write operation is constructed as a curve whose segments over these subintervals are straight lines with negative or positive slope (depending on whether they correspond to reading a \(0\) or a \(1\), respectively). As a result, we can separate the states according to the symbol read and redirect them through different corridors of the billiard, as shown in Figure~\ref{fig: read--write}. Moreover, if it is necessary to change a \(0\) to a \(1\) (or vice versa), the angle of the straight segments of the wall can be slightly adjusted depending on \(k\), so that the resulting ray is displaced by the required factor.

    \begin{figure}[h]
      \centering
      \includegraphics[width=0.75\linewidth]    {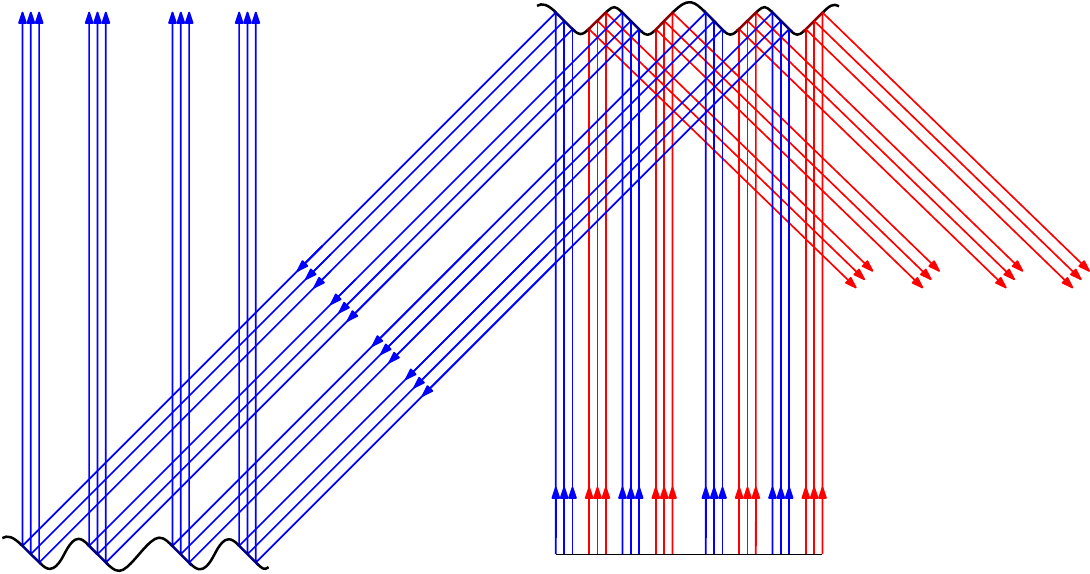}
      \caption{Billiard dynamics implementing the read--write operation. The blue and red rays correspond to reading a \(0\) or a \(1\), respectively.}
      \label{fig: read--write}
    \end{figure}

    Notice that, since there are finite gaps between the intervals where the computation states are, these straight segments can be joined smoothly, producing a differentiable curve. 

    For a more thorough explanation of how this curve is constructed, refer to the \nameref{appendix}.

\end{proof}

\begin{remark}
   Notice that the proofs of these two lemmas rely crucially on the specific encoding of the computational state into a one-dimensional interval and the structural features of billiard dynamics. In particular, if one adopted the alternative definition of a Turing machine in which the head is fixed at position zero and the tape is shifted instead, it would suffice to encode the tape \(t\) as a single point \(x_t \in I\). However, the interval map conjugate to the shift would then be extremely complicated and, to our understanding, impossible to realize within a physical system. The encoding we use avoids this obstruction at the cost of complicating the read--write map, as discussed in the proof of Lemma~\ref{lemma: read--write}. Despite this added complexity, the construction remains physically realizable by exploiting the power of billiard dynamics.
\end{remark}

\subsection{Proof of the main theorem}

\begin{proof}[Proof of Theorem \ref{thm: main result}]
    As discussed above, the strategy of the proof is to translate the directed graph encoding the Turing machine into a planar billiard table whose trajectories simulate its computation. Notice that we can assume, without loss of generality, that our Turing machine satisfies the condition discussed in Remark \ref{rem:normalizations}.

    First, as shown in Figure~\ref{fig: graph to billiard}, we represent each state $q_i$ by a segment $\Phi_i(I)\subset B$ obtained via a rational translation of the unit interval.  In the particular case of the initial and halting states, we choose the segments $\Phi_0(I)$ and $\Phi_{\mathrm{halt}}(I)$ to lie on the boundary of the table.  Thus the trajectory encoding the computation with input $t$ starts perpendicularly from the boundary at the computable point $\Phi_0(x_{t,0})$.  Similarly, if the computation halts with the head at position $k$ and output tape $t^*$, the trajectory meets the boundary orthogonally at the computable point $\Phi_{\mathrm{halt}}(x_{t^*,k})$.
    
    Using Lemmas~\ref{lemma: shift} and~\ref{lemma: read--write}, we can construct billiard corridors associated with each edge of the graph so that the billiard dynamics along that corridor implements precisely the read--write--shift operation prescribed by the transition. In this way, whenever a trajectory enters a corridor encoding an edge, the evolution of the tape and head position is updated exactly according to the Turing machine rule corresponding to that edge. 

    At each state $q_i$, several edges of the graph may enter the vertex. To reproduce this behavior in the billiard table, we introduce suitable billiard walls, of the same type as those used in the proof of Lemma~\ref{lemma: read--write}, that merge the incoming corridors into a single outgoing region, as illustrated in Figure~\ref{fig: graph to billiard}. The construction is arranged so that if the current computation state is $(t,q_i,k)$, then the billiard trajectory passes through the point $\Phi_i(x_{t,k})$.

    The reversibility of the Turing machine is essential at this stage of the construction. The billiard walls used to merge trajectories are designed by reversing the dynamics of the separating walls introduced in Figure~\ref{fig: read--write}: rather than splitting a single incoming trajectory into several outgoing ones, they identify several incoming corridors and funnel all corresponding trajectories into a single beam. This is only possible because of reversibility, which ensures that trajectories coming from different edges do not follow the same path.

    With this setup, the correspondence between computation and dynamics is exact. 
    Indeed, the billiard trajectory starting orthogonally at $\Phi_0(x_{t,0})$ reaches, again orthogonally to the boundary, the point $\Phi_{\mathrm{halt}}(x_{t^*,k})$ if the Turing machine with input tape $t$ halts with output tape $t^*$ and head position $k$. Otherwise, if the Turing machine does not halt, the billiard trajectory is not closed and keeps looping around the billiard table without ever entering the corridor leading to $\Phi_{\mathrm{halt}}(I)$. Moreover, the dynamics simulate the computation of the Turing machine step by step, in the sense that if
\[
(t_n,q_n,k_n)=\Delta_M^n(t_0,q_0,k_0),
\]
then the billiard trajectory starting orthogonally from
$\Phi_{q_0}(x_{t_0,k_0})$ passes successively through the points $\Phi_{q_n}(x_{t_n,k_n})$ for $n=0,1,2,\ldots$. In addition, it also follows from the construction that the trajectory only crosses the ``checkpoint'' intervals $\Phi_{q_i}(I)$ at these points, which corresponds to the steps of the associated computation.

Finally, to establish the global regularity of the billiard table, observe that the  Turing machine consists of finitely many states and transitions. Therefore, the 
billiard table requires only a finite number of corridors and walls to implement 
the shift and read--write dynamics. The walls implementing the shift operations 
are purely parabolic and hence smooth. For each transition that requires a read-write operation, the construction uses a single connected wall. As shown in the \nameref{appendix}, each of these read--write walls is smooth everywhere except at exactly two singular accumulation points. Because there are only finitely many read--write walls in the global construction, there are only finitely many singular points in total. Therefore, the global boundary of the billiard table is composed of a finite 
union of smooth arcs meeting at finitely many singular points.
\end{proof}

Applying this construction to a universal Turing machine, we get:
\begin{corollary}
    There exist two-dimensional billiard systems that are Turing complete in the sense of Definition \ref{def: Turing completeness}.
\end{corollary}

An interesting outcome of this construction is that the undecidability of the Halting problem \cite{turing} implies the undecidability of classical billiard systems. Directly from the main theorem, deciding whether a given trajectory enters a computable open set becomes undecidable. Moreover, from our particular construction of the billiard table, we also get the following result:
\begin{corollary}
There exists an explicitly constructible planar, bounded and finitely piecewise smooth billiard table for which deciding whether a trajectory starting at a computable point and direction is periodic is algorithmically undecidable.
\end{corollary}

\begin{proof}
Let us consider a billiard table associated with a universal Turing machine, constructed as in the proof of Theorem~\ref{thm: main result}. We claim that it is algorithmically undecidable to determine whether the billiard trajectory starting orthogonally at the \emph{computable} point $\Phi_0(x_{t,0})$ is periodic.

By construction, the billiard dynamics simulate the computation of the associated Turing machine with initial tape $t$. If the machine halts, then the trajectory reaches the boundary orthogonally at a point $\Phi_{\mathrm{halt}}(x_{t^*,k})$, where $t^*$ is the output tape. The orthogonal reflection reverses the motion, so the trajectory retraces its path and therefore forms a periodic orbit.

Suppose now that the machine does not halt. Then the corresponding billiard trajectory never reaches the halting region and continues moving through the corridors of the billiard table, reproducing the successive steps of the computation. Since the Turing machine is reversible, the induced dynamics on computation states are injective. Therefore, if at some stage the computation were to enter a periodic loop, then injectivity would force all previous states in the orbit to be uniquely determined by the states in that loop. In particular, the initial configuration would also belong to the same periodic orbit, and hence the computation would have to return to its initial state \(q_0\). But this is impossible, because no transition enters \(q_0\). Therefore, the computation can never become periodic, and so the associated billiard trajectory never closes up.

Since the halting problem for a universal Turing machine is undecidable, it follows that deciding whether this billiard trajectory is periodic is also undecidable.
\end{proof}

From our proof method, it also follows immediately that the system follows the alternative definition of Turing completeness presented in \cite{tao1,CardonaMirandaPeraltaPresas,CardonaMirandaPeralta_euclidean}. Namely, it is possible to algorithmically associate the halting problem of any Turing machine to the reachability problem for certain billiard trajectories  of a planar billiard table. More formally, we have the following result:
\begin{corollary}
    There exists an explicitly constructible planar, bounded and finitely piecewise smooth billiard table $B$ where the reachability problem for certain starting points is undecidable in the following sense.  Given any Turing machine $M$, any integer $N\ge 0$, any input tape $t$, any integer $k\in\mathbb{Z}$ and any finite string $(t_{k-N}^*,\ldots,t_{k+N}^*)$ of symbols of the alphabet, we can algorithmically construct a point $p\in\partial B$ and an open set $U\subset\mathbb{R}^2$ such that the billiard trajectory starting perpendicularly from $p$ intersects $U$ if and only if $M$ halts with its head at position $k$ and outputs a tape whose entries $k-N,\ldots,k+N$ are $t_{k-N}^*,\ldots,t_{k+N}^*$.
\end{corollary}
\begin{proof}
    It suffices to prove this result for a fixed universal Turing machine $M$. Applying Theorem \ref{thm: main result}, we can construct the desired billiard table whose dynamics simulate the computations of $M$. Moreover, since $M$ is a universal Turing machine, with this fixed billiard table $B$ one can simulate any other Turing machine. With this setup, the correspondence between computation and dynamics is exact. Indeed, using the construction given in the proof of Theorem \ref{thm: main result}, for an input tape $t$ and an integer $k\in\mathbb{Z}$, the billiard trajectory starting orthogonally at $p:=\Phi_0(x_{t,0})$ reaches, again orthogonally to the boundary, the point $\Phi_{\mathrm{halt}}(x_{t^*,k})$ if the Turing machine with input tape $t$ halts with output tape $t^*$ and head position $k$. Otherwise, if the Turing machine does not halt, the billiard trajectory is not closed and keeps looping around the billiard table without ever entering the corridor leading to $\Phi_{\mathrm{halt}}(I)$.

    This means that, given \(N \geq 0\) and symbols \(t^*_{k-N},\dots,t^*_{k+N}\), we consider all points of the form \(\Phi_{\mathrm{halt}}(x_{t,k})\) such that the tape \(t\) satisfies
    \[
        t_{k-N}=t^*_{k-N},\dots,t_{k+N}=t^*_{k+N}.
    \]
    By construction of the Cantor-type encoding, these points form a finite union of disjoint subintervals of \(\Phi_{\mathrm{halt}}(I)\), corresponding exactly to the possible output tapes with the prescribed symbols in positions \(k-N,\dots,k+N\) and the head position $k$. We then choose \(U\) to be a sufficiently small computable open neighborhood of this finite union so that it contains no other halting states with different symbols in the prescribed positions. With this choice, the billiard trajectory enters \(U\) if and only if the Turing machine halts with head position \(k\) and output tape containing the symbols \(t^*_{k-N},\dots,t^*_{k+N}\) at positions \(k-N,\dots,k+N\).
\end{proof}

\section{Physical Applications}

Billiard dynamics arises naturally in classical mechanics, both as an exact reformulation of particle systems with hard constraints and as a singular limit of smooth Hamiltonian flows. Thus, the computational universality of planar billiards is not only a feature of idealized reflection models: it points to intrinsic algorithmic limitations in the long-time prediction of physically meaningful Hamiltonian systems admitting billiard-type limits.

A basic example is provided by hard-sphere systems. A gas of \(N\) hard spheres of radius \(r\) moving in a bounded domain \(D\subset \mathbb R^d\) can be described exactly as a billiard flow in configuration space. If \(q_i\in D\) denotes the position of the \(i\)-th particle, the admissible configurations form
\[
\mathcal C =
\left\{
(q_1,\ldots,q_N)\in D^N \;\middle|\; |q_i-q_j|\geq 2r
\ \text{for all } i\neq j
\right\}.
\]
The motion is free in the interior of \(\mathcal C\), while collisions correspond to elastic reflections at \(\partial \mathcal C\) \cite{ChernovMarkarian2006}. Consequently, billiard systems appear directly in kinetic theory, and invariant billiard subsystems may already exhibit algorithmically undecidable qualitative behavior.

Billiard-type dynamics also appears in celestial mechanics through singular limits associated with close encounters. In the Newtonian \(N\)-body problem,
\[
H(q,p)
=
\sum_{i=1}^N \frac{|p_i|^2}{2m_i}
-
\sum_{1\leq i<j\leq N}
\frac{m_i m_j}{|q_i-q_j|},
\]
binary collisions occur along the sets \(q_i=q_j\). Regimes dominated by repeated near-collisions can be modeled by collision chains: trajectories composed of Keplerian arcs joined by reflection-type matching conditions at collision manifolds. This idea goes back to Poincar\'e's solutions of the second species and was developed rigorously by Bolotin and collaborators \cite{Poincare1899,BolotinNegrini2013,Bolotin2017}. In such limits, the dynamics naturally carries symbolic itineraries recording successive near-collisions, in close analogy with billiard dynamics.

A concrete example is provided by the planar circular restricted three-body problem. After reduction to a rotating frame and rescaling near a close encounter with one primary, the Hamiltonian takes the schematic form
\[
H(q,p)
=
\frac{1}{2}\lvert p-\omega\times q\rvert^2
-
\frac{1}{|q|}
+
\mu V(q)
+
O(\mu),
\]
where the Kepler term dominates near collision and the perturbation $\mu V(q)$ encodes the influence of the distant primary. In the corresponding singular limit, near-collision trajectories satisfy reflection-type matching conditions at the collision set. Symbolic itineraries analogous to those arising in billiard systems then organize the global dynamics.

Finally, billiards arise as singular limits of smooth Hamiltonian systems with steep confining potentials,
\[
H_\varepsilon(q,p)
=
\frac12 |p|^2 + V_\varepsilon(q),
\qquad
V_\varepsilon(q)\to +\infty
\quad\text{as } q\to \partial B.
\]
As \(\varepsilon\to0\), the corresponding Hamiltonian flow converges, away from grazing trajectories, to the billiard flow in \(B\) on finite time intervals. This soft-wall approximation embeds billiards within smooth Hamiltonian mechanics and supports the interpretation of billiard undecidability as a phenomenon that may arise as a limit of physically natural systems, in the spirit of Moore's perspective on undecidability in dynamical systems \cite{Moore1}.

\section{Conclusions}
Billiard systems have long served as fundamental models in dynamical systems, valued for their simplicity and rich behavior. In this work, we show that their expressive power extends beyond classical notions of integrability and chaos \cite{Birkhoff1927,Sinai1963}: planar billiards can simulate arbitrary computation. As a result, basic dynamical questions, including the existence of periodic trajectories and reachability, become algorithmically undecidable \cite{turing,Moore1}. This viewpoint suggests that computational complexity is intrinsic to billiard dynamics rather than a mathematical artifact. Billiard-type motion arises naturally across physics, often as an effective description in singular or near-collision regimes, notably in celestial mechanics and kinetic theory. In such settings, undecidability imposes fundamental limits on long-term prediction, complementing and extending the traditional role of chaos.

Billiards, therefore, provide a transparent arena for exploring the interface between geometry, dynamics, and computation. An open and provocative question is whether these classical algorithmic barriers survive quantization. If traces of classical undecidability persist in quantum billiards, they would point to limits of predictability that transcend the classical–quantum divide.

\bibliographystyle{plain}
\bibliography{bib}

\section*{Appendix}\label{appendix}
The purpose of this appendix is to provide the missing details in the proof of Lemma \ref{lemma: read--write} from the main text, thus completing the argument that the read--write operation can be realized by means of billiard dynamics. Namely, we give an explicit expression for the straight-segments section of the read--write wall. Then, we prove that the read--write dynamics are well-defined in the sense that after bouncing off the wall, any trajectory encoding a computational state does not collide again with any other part of the wall. Finally, we show that the wall can be made smooth by joining the straight segments smoothly in the gaps between them, except at the two accumulation points at the endpoints of the interval, where the wall can only be extended continuously.

We start by giving an explicit expression for the straight segments, the part of the read--write wall where the trajectories associated with computational states collide. Recall first that, according to our definition of a Turing machine, applying the read--write operation to a computation state $(t,k)$ requires inspecting and possibly modifying the tape symbol $t_k$. Under our Cantor set--type encoding, this amounts to detecting and, if necessary, changing a specific digit in the ternary expansion associated with the tape before applying the embedding $\tau_k$. For this reason, we argued that, inside the interval $I_k$ corresponding to the head position $k$, the admissible computation states decompose into two alternating families of subintervals, depending on whether the symbol at position $k$ is $0$ or $1$. We now make this decomposition explicit.

Fix $k\in\mathbb{Z}$ and, for simplicity, assume $k\geq 0$ (the case $k<0$ is completely analogous). We begin by considering the encoding of the tape $t$ alone as a point $x_t\in I$, prior to applying the embedding $\tau_k$. Our goal is to describe two families of Cantor set blocks corresponding to the conditions $t_k=0$ and $t_k=1$. Recall that, in our encoding, the symbol $t_k$ is stored in the $(2k+1)$-th digit of the ternary expansion of $x_t$. To simplify notation, we introduce
\[
x_{\epsilon_1\ldots\epsilon_l}
:=2\sum_{n=1}^{l}\epsilon_n\,3^{-n},
\]
where each $\epsilon_n\in\{0,1\}$.

With this notation, the Cantor set blocks corresponding to tapes with $t_k=0$ are given by
\[
I_{\epsilon_1\ldots\epsilon_{2k}0}
=\bigl[x_{\epsilon_1\ldots\epsilon_{2k}0},
\,x_{\epsilon_1\ldots\epsilon_{2k}0}+3^{-(2k+1)}\bigr],
\qquad \epsilon_n\in\{0,1\},\ n=1,\ldots,2k,
\]
while those corresponding to tapes with $t_k=1$ are
\[
I_{\epsilon_1\ldots\epsilon_{2k}1}
=\bigl[x_{\epsilon_1\ldots\epsilon_{2k}1},
\,x_{\epsilon_1\ldots\epsilon_{2k}1}+3^{-(2k+1)}\bigr],
\qquad \epsilon_n\in\{0,1\},\ n=1,\ldots,2k.
\]

We now apply the embedding $\tau_k$, using its explicit expression from the proof of Lemma~1. Define
\[
x_{\epsilon_1\ldots\epsilon_l}^k
:=\tau_k(x_{\epsilon_1\ldots\epsilon_l})
=1+3^{-(1+k)}\bigl(-2+x_{\epsilon_1\ldots\epsilon_l}\bigr).
\]
Then the Cantor set blocks inside $I_k$ corresponding to the condition $t_k=s$, with $s\in\{0,1\}$, are
\[
I_{\epsilon_1\ldots\epsilon_{2k}s}^k
:=\tau_k\bigl(I_{\epsilon_1\ldots\epsilon_{2k}s}\bigr)
=\bigl[x_{\epsilon_1\ldots\epsilon_{2k}s}^k,
\,x_{\epsilon_1\ldots\epsilon_{2k}s}^k+3^{-(3k+2)}\bigr],
\]
again with $\epsilon_n\in\{0,1\}$ for $n=1,\ldots,2k$. There are $2^{2k}$ such intervals, each of length $3^{-(3k+2)}$.

We now describe the billiard walls implementing the read--write operation. We assume that incoming billiard trajectories move upward, with positive velocity in the $y$-direction, and have horizontal coordinate $x\in[0,1]$, where the computation state $(t,k)$ is encoded as $x_{t,k}$. We first treat the case in which no symbol change occurs, so that the goal is simply to distinguish between $t_k=0$ and $t_k=1$.

After interacting with the billiard walls, we want trajectories corresponding to $t_k=0$ to continue vertically with horizontal coordinate $x_{t,k}-2$, while those corresponding to $t_k=1$ should continue vertically with horizontal coordinate $x_{t,k}+2$. To achieve this separation, trajectories first collide with straight walls placed above the intervals $I_{\epsilon_1\ldots\epsilon_{2k}s}^k$, with slope $-1$ for $s=0$ and slope $+1$ for $s=1$.

For $t_k=0$, we define the walls
\[
W_{\epsilon_1\ldots\epsilon_{2k}0}^k(x)
=2+x_{\epsilon_1\ldots\epsilon_{2k}0}^k
+\frac{3^{-(3k+2)}}{2}-x,
\qquad x\in I_{\epsilon_1\ldots\epsilon_{2k}0}^k.
\]
Each such wall is a line segment of slope $-1$, centered horizontally above the corresponding interval $I_{\epsilon_1\ldots\epsilon_{2k}0}^k$ and located at height $y=2$.

Similarly, for $t_k=1$, we take walls of slope $+1$ given by
\[
W_{\epsilon_1\ldots\epsilon_{2k}1}^k(x)
=2-x_{\epsilon_1\ldots\epsilon_{2k}1}^k
-\frac{3^{-(3k+2)}}{2}+x,
\qquad x\in I_{\epsilon_1\ldots\epsilon_{2k}1}^k.
\]
These walls deflect trajectories corresponding to $t_k=0$ to the left and those corresponding to $t_k=1$ to the right.

Once the trajectories have been separated, we must redirect them so that they again travel vertically. For instance, in the case $t_k=0$, this is achieved by introducing a second family of walls
\[
\widetilde{W}_{\epsilon_1\ldots\epsilon_{2k}0}^k(x)
=x_{\epsilon_1\ldots\epsilon_{2k}0}^k
+\frac{3^{-(3k+2)}}{2}-(x+2),
\]
defined for
\[
x\in\bigl[x_{\epsilon_1\ldots\epsilon_{2k}0}^k-2,
\,x_{\epsilon_1\ldots\epsilon_{2k}0}^k-2+3^{-(3k+2)}\bigr].
\]
A trajectory starting at $x_{t,k}$ with $t_k=0$ first reflects off the wall $W_{\epsilon_1\ldots\epsilon_{2k}0}^k$, then travels along a line of slope $-1$ until it reaches $\widetilde{W}_{\epsilon_1\ldots\epsilon_{2k}0}^k$. Since the two walls are parallel, the outgoing trajectory is vertical, with horizontal coordinate $x_{t,k}-2$, as desired (see Figure \ref{fig: read--write no change}). A similar construction can be used to have the outgoing trajectory corresponding to $t_k=1$ travel vertically with horizontal coordinate $x_{t,k}+2$.

\begin{figure}[h]
    \centering
    \includegraphics[width=0.8\linewidth]{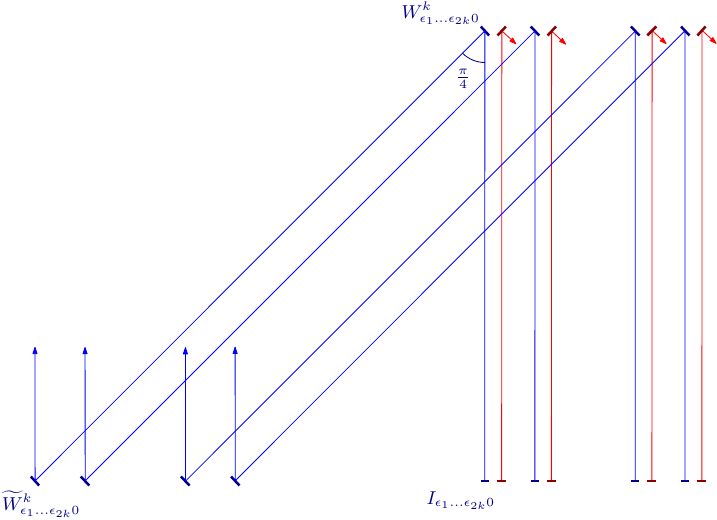}
    \caption{Billiard dynamics implementing the read–write operation with no symbol change. The blue and
red rays correspond to reading a 0 or a 1, respectively.}
    \label{fig: read--write no change}
\end{figure}

We now turn to the case in which a symbol change is required. For simplicity, suppose that we want to modify the tape symbol at position $k$ from $t_k=0$ to $t'_k=1$. In terms of our encoding, this means that an incoming vertical trajectory with horizontal coordinate $x_{t,k}$ should be transformed, after interacting with the read--write construction, into a vertical trajectory with horizontal coordinate
\[
x_{t,k}+2\cdot 3^{-(3k+2)}-2.
\]
Compared with the pure read case, this requires a slight horizontal displacement of the trajectory. Therefore, the separating wall can no longer have slope exactly $-1$. Instead, we choose the angle $\alpha_k$ with the horizontal so that
\[
\tan\alpha_k=\frac{1}{1+ 3^{-(3k+2)}}.
\]
This deviation from slope $-1$ is very small, especially for large values of $k$, and in the limit $k\to\infty$ we recover the original configuration. We can observe this in Figure \ref{fig: read--write  change}, where the small displacement is displayed compared to the original no-rewrite wall.

\begin{figure}[h]
    \centering
    \includegraphics[width=0.8\linewidth]{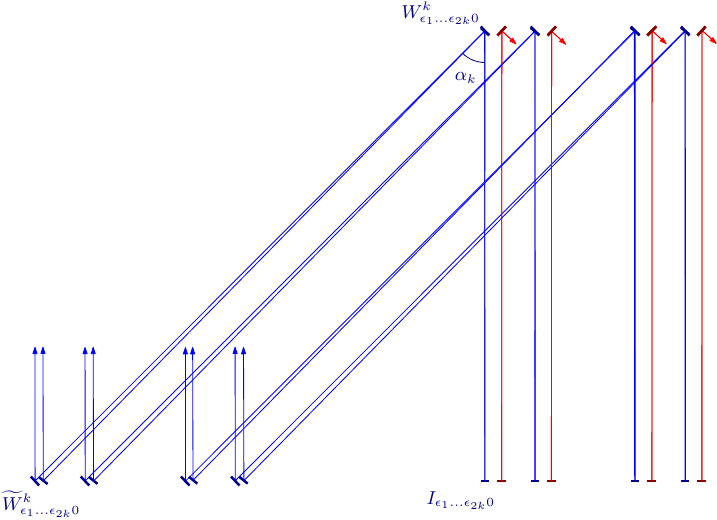}
    \caption{Billiard dynamics implementing the read–write operation with symbol change compared to the no-change situation.}
    \label{fig: read--write  change}
\end{figure}

We can now prove that the read--write operation can be performed with these walls, and in particular, that the corresponding trajectories do not collide with any other wall from this construction.

\begin{lemma}
    Let us consider any $k\in \Z$ (we can assume $k\geq 0$), $t\in \Lambda$ and $s\in \{0,1\}$. Assume that $x_{t,k}=x^k_{\epsilon_1\ldots \epsilon_2{k}t_k}$ for some $\epsilon_1\ldots \epsilon_{2k}\in \{0,1\}$. We claim that the billiard trajectory starting at $(x^k_{\epsilon_1\ldots \epsilon_{2k}t_k},0)$ with vertical direction, travels from the wall
$W_{\epsilon_1\ldots\epsilon_{2k}t_k}^k$ to the corresponding
$\widetilde{W}_{\epsilon_1\ldots\epsilon_{2k}s}^k$ without colliding with any other wall of the construction.
\end{lemma}
\begin{proof}
    First, since all walls lie at the same height, this reduces to checking that, after bouncing off $W_{\epsilon_1\ldots\epsilon_{2k}s}^k$, the trajectory descends sufficiently fast so as not to intersect any other wall $W_{\epsilon'_1\ldots\epsilon'_{2k'}s'}^{k'}$.

    Observe that trajectories corresponding to a symbol change have a steeper downward slope than those in the no-rewrite case. As a result, avoiding collisions is strictly easier in the rewriting case. It therefore suffices to carry out the check for the read--only configuration, where the walls have slopes $\pm 1$.

    Consider a trajectory that bounces off a wall $W_{\epsilon_1\ldots\epsilon_{2k}0}^k$, i.e., the case $t_k=0$. Until it reaches $\widetilde{W}_{\epsilon_1\ldots\epsilon_{2k}0}^k$, this trajectory remains parallel to all walls of the form $W_{\epsilon'_1\ldots\epsilon'_{2k'}1}^{k'}$ (or their slope is larger when the symbol $1$ is modified by the transition). Since such walls are separated by a horizontal distance of at least $2\cdot 3^{-(3k+2)}$, there is no risk of collision with them. We therefore only need to check that the trajectory does not hit any wall
$W_{\epsilon'_1\ldots\epsilon'_{2k'}0}^{k'}$ lying to its left.

    It is enough to consider the extremal situation. Namely, we check that the trajectory bouncing off the left endpoint of
$W_{\epsilon_1\ldots\epsilon_{2k}0}^k$, located at
\[
\bigl(x_{\epsilon_1\ldots\epsilon_{2k}0}^k,\,
2+3^{-(3k+2)}/2\bigr),
\]
does not intersect the right endpoint of any wall
$W_{\epsilon'_1\ldots\epsilon'_{2k'}0}^{k'}$, located at
\[
\bigl(x_{\epsilon'_1\ldots\epsilon'_{2k'}0}^{k'}+3^{-(3k'+2)},\,
2-3^{-(3k'+2)}/2\bigr).
\]
This amounts to verifying the inequality
\[
x_{\epsilon_1\ldots\epsilon_{2k}0}^k-
\bigl(x_{\epsilon'_1\ldots\epsilon'_{2k'}0}^{k'}+3^{-(3k'+2)}\bigr)
>
\frac{3^{-(3k+2)}}{2}+\frac{3^{-(3k'+2)}}{2}.
\]

We first consider the case $k=k'$. By construction, if
$x_{\epsilon_1\ldots\epsilon_{2k}0}^k\neq
x_{\epsilon'_1\ldots\epsilon'_{2k}0}^{k}$, then
\[
x_{\epsilon_1\ldots\epsilon_{2k}0}^k-
x_{\epsilon'_1\ldots\epsilon'_{2k}0}^{k}
\geq 2\cdot 3^{-(3k+1)}.
\]
It follows that
\[
x_{\epsilon_1\ldots\epsilon_{2k}0}^k-
\bigl(x_{\epsilon'_1\ldots\epsilon'_{2k}0}^{k}+3^{-(3k+2)}\bigr)
\geq 3^{-(3k+2)}(6-1)
>3^{-(3k+2)},
\]
which is more than enough to guarantee the desired separation.

We now turn to the case $k\neq k'$. In this situation,
$x_{\epsilon_1\ldots\epsilon_{2k}0}^k\in I_k$ and
$x_{\epsilon'_1\ldots\epsilon'_{2k'}0}^{k'}\in I_{k'}$.
Since the distance between $I_{k'}$ and $I_{k'+1}$ is
$3^{-(k'+2)}$, the distance between $I_{k'}$ and $I_k$ is at least this large. Consequently,
\[
x_{\epsilon_1\ldots\epsilon_{2k}0}^k-
\bigl(x_{\epsilon'_1\ldots\epsilon'_{2k'}0}^{k'}+3^{-(3k'+2)}\bigr)
\geq 3^{-(k'+2)}
\geq 3^{-(3k'+2)}
>
\frac{3^{-(3k+2)}}{2}+\frac{3^{-(3k'+2)}}{2},
\]
where in the last inequality we used the fact that $k>k'$.

We conclude that, after bouncing off $W_{\epsilon_1\ldots\epsilon_{2k}0}^k$, the trajectory does not intersect any other wall
$W_{\epsilon'_1\ldots\epsilon'_{2k'}s'}^{k'}$. By symmetry, the same argument applies to walls of the form
$W_{\epsilon_1\ldots\epsilon_{2k}1}^k$ and to possible collisions with
$\widetilde{W}_{\epsilon_1\ldots\epsilon_{2k}s}^k$, as well as to the case $k<0$.
\end{proof}

We now explain how to join the straight segments so as to obtain a wall with the desired regularity that implements the read--write operation.
\begin{proposition}
   There exists a connected wall implementing the read--write operation through billiard
dynamics. This wall is smooth away from its endpoints, where it extends only
continuously.
\end{proposition}
\begin{proof}

First, we note that the estimates used above to rule out unwanted collisions are far from sharp, meaning there is a positive margin of safety. This allows us to create a smooth wall by joining the straight segments with smooth arcs inside the gaps between them. Since no 
computational state is encoded in these gaps, the wall can be modified  there without affecting the simulated computation, provided the smooth interpolations remain sufficiently close to the linear segments to preserve 
the non-collision estimates. This smoothing can be performed cleanly everywhere except at the two limit points corresponding to $k \to \pm\infty$.  Indeed, our construction is locally finite: any compact subinterval away from 
the endpoints intersects only finitely many intervals $I_k$, and since each 
$I_k$ contains a finite number of straight segments, any such subinterval contains 
only finitely many oscillations. Thus, we can remove every interior singularity, 
rendering the wall $C^\infty$-smooth everywhere except at the two endpoint accumulation points, where any neighborhood necessarily contains infinitely many oscillations.

 It remains to analyze the behavior of the wall near these two endpoints. Although the smoothing procedure cannot be extended there, the wall still admits a well-defined continuous extension. Indeed, let us consider the portion of the wall corresponding to a fixed interval $I_k$ and consider the case $k\geq 0$, i.e., the limit point on the right corresponding to $k\to \infty$ (the case $k\to -\infty$ is analogous). The oscillatory linear segments of this portion are confined to the Cantor blocks of order $2k+1$. Since there are $2^{2k}$ such blocks, each of length $3^{-(2k+1)}$, their total length is
\[
2^{2k}3^{-(2k+1)}=\frac13\left(\frac49\right)^k\xrightarrow[k\to \infty]{}0.
\]
Moreover, each zig--zag segment has vertical size on the order of $3^{-(3k+2)}$, which also tends to zero as $|k|\to\infty$. Therefore, both the horizontal support and the vertical amplitude of the oscillations vanish near the endpoints of the interval. It follows that the restriction of the wall to $I_k$ converges uniformly to the straight segment joining the endpoints of $I_k$. Consequently, the entire billiard wall extends continuously to the two accumulation points, although these remain the only points where the boundary is not smooth. Indeed, as one approaches either endpoint, the wall still contains infinitely many linear portions with slopes alternating between $1$ and $-1$ (or slopes asymptotically approaching these values, if there is a symbol change). Consequently, the tangent direction oscillates indefinitely and does not admit a limit at either accumulation point. Therefore, the present construction cannot be extended to a $C^1$ curve there.

 \begin{figure}[h]
        \centering
        \includegraphics[width=0.7\linewidth]{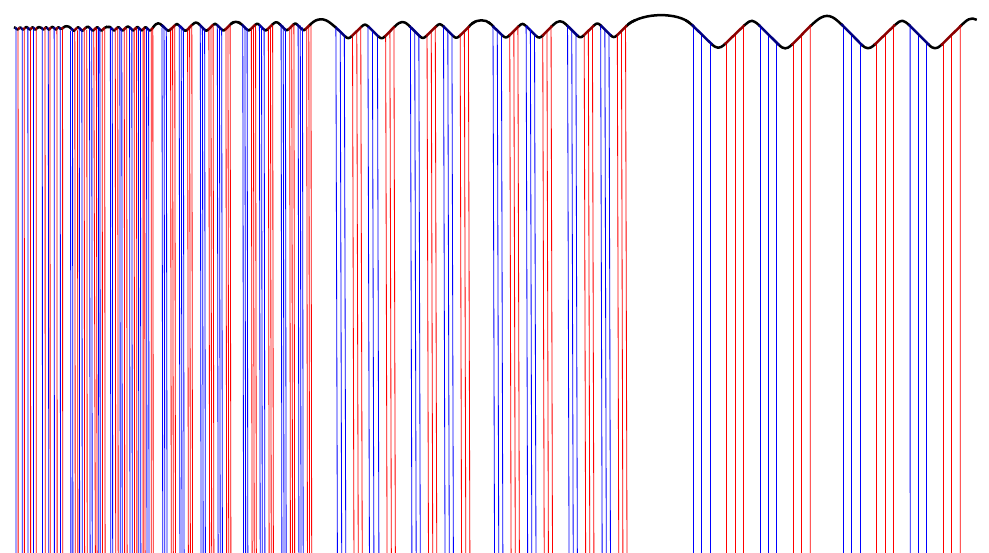}
        \caption{How the billiard wall implementing the read--write operation changes as $k\to -\infty$. }
        \label{fig: read--write 2}
    \end{figure}

 \end{proof}
\end{document}